\documentclass{amsart}
\usepackage{amsfonts}
\usepackage{amsmath,amssymb}
\usepackage{amsthm}
\usepackage{amscd}
\usepackage{graphics}
\usepackage{graphicx}

\theoremstyle{remark}{
\newtheorem{Def}{{\rm Definition}}
\newtheorem{Ex}{{\rm Example}}
\newtheorem{Rem}{{\rm Remark}}
\newtheorem{Prob}{{\rm Problem}}

}
\theoremstyle{plain}
{

\newtheorem{Thm}{Theorem}

}

\begin{document}
\title[Graphs on Reeb spaces of functions being analytic on dense subsets]{Reeb spaces of functions being analytic on dense subsets and their graph structures}
\author{Naoki kitazawa}
\keywords{Real and complex analytic functions and maps. Smooth maps. Reeb spaces. Graphs. \\
\indent {\it \textup{2020} Mathematics Subject Classification}: Primary~26E05, 57R45, 58C05. Secondary~54C30}

\address{Osaka Central Advanced Mathematical Institute (OCAMI) \\
3-3-138 Sugimoto, Sumiyoshi-ku Osaka 558-8585
TEL: +81-6-6605-3103
}
\email{naokikitazawa.formath@gmail.com}
\urladdr{https://naokikitazawa.github.io/NaokiKitazawa.html}
\maketitle
\begin{abstract}

{\it Reeb spaces} of real-valued functions on manifolds are the spaces of all connected components (contours) of level sets and endowed with the natural quotient topology. They have been fundamental and strong tools in investigating manifolds via smooth functions with mild critical points since the birth of fundamental theory of Morse functions in the 20th century. 

We are concerned with topologies and combinatorics of them. Following an explicit note on explicit Reeb spaces of explicit functions which are real analytic (on dense sets) and seem to be simplest and most fundamental, edited by the author himself.

We investigate other construction of examples of such functions and their Reeb spaces. Reeb spaces are naturally graphs in considerable cases and as another work, we also discuss natural definitions of vertices for them.

\end{abstract}
\section{Introduction.}
\label{sec:1}
In the present section, which is for the introduction, \cite{kitazawa8}, a preprint of the author, is respected.

The {\it Reeb space} $R_c$ of a (continuous) map $c:X \rightarrow \mathbb{R}$ on a topological space $X$ is the space of all connected components ({\it contours}) of all preimages ({\it level sets}) $c^{-1}(q)$. Such objects have been fundamental and strong tools in investigating the differentiable manifold $X$ via the function $c$ since the establishment of fundamental theory of so-called Morse functions (\cite{reeb}).

We explain the notion rigorously. The equivalence relation ${\sim}_c$ on $X$ can be defined by the rule that the relation $p_1 {\sim}_c p_2$ holds if and only if $p_1$ and $p_2$ are in a same contour. The {\it Reeb space} $R_c$ of $c$ is the quotient space $R_c:=X/{\sim}_c$. The quotient map $q_c:X \rightarrow R_c$ is defined and the unique continuous map $\bar{c}:R_c \rightarrow Y$ with the relation $c=\bar{c} \circ q_c$ is obtained. In the case $X$ is a differentiable manifold, a contour of $c$ containing critical points is a {\it critical} contour of $c$. A contour of $c$ which is not critical is a {\it regular} contour of $c$. We also call a point of $R_c$ a contour of $c$.

In some nice situation, $R_c$ is homeomorphic to a graph. In \cite{izar}, Izar has been shown in the case of a Morse function on a compact manifold in \cite{izar}. Martinez-Alfaro, Meza-Sarmiento, and Oliveira, have presented a related study in \cite{martinezalfaromezasarmientooliveira}, considering the {\it Morse-Bott} functions. Gelbukh and Saeki have presented general theory on this in \cite{gelbukh, saeki2, saeki3}, for example. Especially, \cite[Theorem 7.5]{gelbukh}, \cite[Theorem 3.1]{saeki2}, and \cite[Theorems 2.1 and 2.8 and "2 Reeb space and its graph structure"]{saeki3} are on topologies and graph structures of Reeb spaces. In the case of the smooth manifold $X$ with no boundary, $R_c$ is, in considerable case, a graph whose vertex set consists of all critical contours of $c$. This is the well-known definition of the {\it Reeb graph} of $c$. However, as in \cite[Theorem 7.5]{gelbukh} and \cite["2 Reeb space and its graph structure"]{saeki3}, only the fact that a Reeb space is homeomorphic to a graph is important. In a preprint \cite{kitazawa3}, the author has discussed new definitions of graph structures on Reeb spaces and this is important in some of our main result (Theorems \ref{thm:2} and \ref{thm:3}).

What follows is our main problem, which is same as \cite{kitazawa8}.
\begin{Prob}
\label{prob:1}
Can we construct a nice smooth function whose space is homeomorphic to a given topological space?
\end{Prob}

This is incredibly new. This has been essentially started by Sharko in 2006 (\cite{sharko}). This is on smooth functions on closed surfaces whose critical points $p$ are of the form $c(z)={\Re} z^{l}+t_p$ and whose Reeb graphs are given graphs: $z \in \mathbb{C}$, with $\mathbb{C}$ being the space of all complex numbers, $l>1$ being an integer, $t_p \in \mathbb{R}$, and ${\Re} s$ denotes the real part of $s \in \mathbb{C}$. This is extended to arbitrary finite graphs in \cite{masumotosaeki}. In \cite{michalak}, finite graphs of a certain class are considered and Morse functions on closed manifolds with prescribed Reeb graphs and regular contours being spheres are reconstructed. 
The author has contributed to this story on reconstruction of explicit smooth functions whose Reeb graphs are isomorphic to given graphs by, \cite{kitazawa1, kitazawa2, kitazawa3, kitazawa4}, by respecting topologies of level sets in addition, considering smooth functions on non-compact manifolds and so-called {\it non-proper} functions, and presenting real algebraic construction, for example.

Especially, real algebraic construction is pioneered by the author in 2023 \cite{kitazawa3}. Constructing real algebraic or real analytic functions is difficult since we cannot use objects and methods in differentiable (smooth) category. In the differentiable situation, we locally construct local functions corresponding to vertices and product bundles corresponding to edges and glue them together to have smooth function whose Reeb graph is desired. This is remarked in \cite{kitazawa7} and extended in \cite{kitazawa6} for example, and developing now mainly due to the author. 

We present one of original idea of the author. We consider a region in the $n$-dimensional Euclidean space ({\it real affine space}) ${\mathbb{R}}^n$ surrounded by real algebraic hypersurfaces, construct a real algebraic map onto the region, generalizing the canonical projection of the unit sphere $S^{m}:=\{x=(x_1, \cdots x_{m+1}) \in {\mathbb{R}}^{m+1} \mid {\Sigma}_{j=1}^{m+1} {x_j}^2=1\}$ onto the unit disk $D^n:=\{x=(x_1, \cdots x_{n}) \in {\mathbb{R}}^{n} \mid {\Sigma}_{j=1}^{n} {x_j}^2 \leq 1\}$. This is, in terminologies of the singularity theory of differentiable maps and applications to differentiable topology of manifolds, constructing so-called special generic maps in the real algebraic situation. By composing the canonical projection, we have a desired function. For special generic maps, see \cite{saeki1} as a fundamental differential topological study. Recent preprint \cite{kitazawa9} is a related study where we do not assume related knowledge. The article \cite{bodinpopescupampusorea} is also a related elementary real algebraic study and the author has been also helped by this study.

Recently, a real analytic study which is essentially different from real algebraic studies is presented by the author, as \cite{kitazawa8} and an explicit analytic function on a non-compact manifold and a smooth function on closed manifold which is real analytic on an open and dense subset are presented and their Reeb spaces are investigated. They are explicit examples of Reeb spaces which are not finite graphs. Although such cases have been presented in \cite{gelbukh, saeki1, saeki2}, different from them, explicit representation via elementary functions are explicitly presented first.

In the present paper, we give new examples with new methods. We introduce some additional notation. Let ${\pi}_{k_1+k_2,k_2}:{\mathbb{R}}^{k_1+k_2} \rightarrow {\mathbb{R}}^{k_1}$ denote the canonical projection ${\pi}_{k_1+k_2,k_2}(x):=x_1$ ($x:=(x_1,x_2) \in {\mathbb{R}}^{k_1} \times {\mathbb{R}}^{k_2}={\mathbb{R}}^{k_1+k_2}$ with $k_1, k_2>0$). We use $0 \in {\mathbb{R}}^k$ for the origin of ${\mathbb{R}}^k$. A {\it graph} means a $1$-dimensional CW complex the closure of each 1-cell ({\it edge}) of which is homeomorphic to $D^1$. A proper map $c:X \rightarrow Y$ between topological spaces $X$ and $Y$ means a map whose preimage $c^{-1}(K)$ is compact for any compact subset $K \subset X$. Theorem \ref{thm:1} is our new result. 
\begin{Thm}
	\label{thm:1}
	
	There exist a connected infinite graph $G_0$ and an $m$-dimensional smooth connected submanifold $X_{m} \subset {\mathbb{R}}^{m+2}$ which is non-compact and represented as the zero set ${e_m}^{-1}(0)$ of some smooth map $e_m:{\mathbb{R}}^{m+2} \rightarrow {\mathbb{R}}^2$ with a function $c_{m}:X_{m} \rightarrow \mathbb{R}$ enjoying the following properties, where $m>1$ is an arbitrary integer greater than $1$.
	\begin{enumerate}
\item \label{thm:1.1} The restriction of $e_m$ to an open set of the form $\{t \mid -a<t<a\} \times {\mathbb{R}}^{m+1} \subset {\mathbb{R}}^{m+2}$ {\rm (}$a>0${\rm )} is a real polynomial map and $X_{m}$ is also in $\{t \mid -a<t<a\} \times {\mathbb{R}}^{m+1}$.
		\item \label{thm:1.2} The function $c_{m}$ is the restriction of ${\pi}_{m+1,1}$ to $X_{m}$. 
		\item \label{thm:1.3} The Reeb space $R_{c_{m}}$ is also regarded as the Reeb graph of $c$ and two graphs $G_0$ and $R_{c_{m}}$ are isomorphic. 
		\item \label{thm:1.4} The map $q_{c_{m}}:X_m \rightarrow R_{c_{m}}$ is proper.
	\end{enumerate}
\end{Thm}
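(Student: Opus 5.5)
Our plan is to build $X_m$ as the boundary of a thickened region in the plane. We first construct a planar region $D \subset \{(t,y)\mid -a<t<a\} \times \mathbb{R}$, take a function $f$ on $\mathbb{R}^2$ which is positive exactly on $D$, and then lift to
\[
X_m:=\{(t,y,z)\in \mathbb{R}\times\mathbb{R}\times\mathbb{R}^{m} \mid f(t,y)=\textstyle\sum_j {z_j}^2\}.
\]
This generalizes the canonical projection of the unit sphere onto the disk, as recalled in the introduction. Over an interior point of $D$ the fiber is a sphere $S^{m-1}$, and over the boundary $\partial D$ it is a point. Since the ambient space is $\mathbb{R}^{m+2}$, which has one more coordinate than this picture uses, we take $e_m=(e_{m,1},e_{m,2})$ with $e_{m,1}(t,y,z):=f(t,y)-\sum_{j=1}^{m}{z_j}^2$ for $z\in\mathbb{R}^m$, and with $e_{m,2}$ a coordinate function whose zero set cuts out the extra coordinate. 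We also set $c_m:=t$, the first coordinate, so that property (\ref{thm:1.2}) holds.

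For $D$ we take the union of infinitely many round disks of radius $r<a$ centred at $(0,k)$, $k\in\mathbb{Z}$, such that consecutive disks overlap. Cut out by a smooth $f$, this is an infinite chain. The level sets of $t$ on $X_m$ over the vertical slice $t=t_0$ are then $S^{m-1}$-bundles over the intervals of $D\cap\{t=t_0\}$, collapsing at the ends, hence $(m)$-spheres, one for each interval. The slice $\{t=t_0\}\cap D$ has finitely many components when $|t_0|$ is near $r$, namely one per disk. It is connected near $t=0$ when the overlaps are large.

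The resulting Reeb space $G_0$ has the following shape. There is one long edge $\{t=0\}$ area, a vertical line segment of infinite extent, which is a single non-compact component. To make $q_{c_m}$ proper we must instead arrange finitely many contours near each point, so we choose the disks' centres at $(0,k)$ with the real analytic/polynomial $f$ only required on the strip. The key step is to choose $f$ on $\{-a<t<a\}\times\mathbb{R}$ polynomial. We want a polynomial in $(t,y)$ whose positive set in the strip $|t|<a$ consists of bounded components accumulating only at infinity, for example
\[
f(t,y)=\cos$-like.
\]
Since this must be polynomial, we instead use $f(t,y)=(a^2-t^2)\,g(t)-h(y)$ restricted suitably. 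Here the honest choice is to let $f(t,y)= \varepsilon - t^2 - (\text{something in } y \text{ vanishing on } \mathbb{Z})$, which is not polynomial. Therefore we expect to use the freedom that $e_m$ is only smooth outside the strip, and to let the infinite structure come from the $t$-direction accumulating toward $t=\pm a$. We take $c_m=t$ and $D$ polynomially defined within $|t|<a$, with infinitely many branches as $t\to a$: for instance $D=\{(t,y)\mid 0<a^2-t^2,\ y^2(a^2-t^2)<1\}$-type regions combined with oscillation supplied by a smooth $e_m$ outside the strip. We also arrange $X_m$ to lie inside the strip.

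We expect the main obstacle to be reconciling polynomiality on the strip with an infinite graph while keeping $q_{c_m}$ proper. Once $D$ is fixed, the properties (\ref{thm:1.3}) and (\ref{thm:1.4}) follow as outlined. Critical points of $c_m$ occur exactly over tangencies of vertical lines with $\partial D$. Local Morse-type analysis of $\sum z_j^2=f$ then shows that the contours change only at these isolated values, which yields the graph structure with vertices equal to the critical contours. Properness follows because each compact set of $G_0$ meets finitely many edges, whose preimages are compact pieces of $X_m$.
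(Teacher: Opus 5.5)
Your overall framework matches the paper's. You lift a planar region $D$ lying in a strip by sphere fibres collapsing over $\partial D$, take $c_m$ to be the first coordinate, and locate critical points at vertical tangencies of $\partial D$. The genuine gap is that you never produce a region that works, and each candidate you write down fails.

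\textbf{Chain of disks.} For the disks centred at $(0,k)$, overlapping forces $r>\frac12$. Then every line $\{t=t_0\}$ with $t_0^2<r^2-\frac14$ lies entirely in $D$. That contour is non-compact, so $q_{c_m}$ is not proper. For $r^2-\frac14<t_0^2<r^2$ the slice has infinitely many components (one per disk, not finitely many). At $t_0^2=r^2-\frac14$ they all merge into the long contour, which gives a vertex of infinite valence. Moreover, the union of disks has corners, so it is not of the form $\{f>0\}$ with $0$ a regular value, and $\{f=\sum z_j^2\}$ is singular over them.

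\textbf{Other candidates.} The choice $\varepsilon-t^2-h(y)$, with $h\ge 0$ vanishing on $\mathbb{Z}$, gives either disjoint pieces around the points $(0,k)$ (a disconnected $X_m$) or a region containing the whole line $t=0$. The region $\{y^2(a^2-t^2)<1\}$ has connected vertical slices, so its Reeb space is an interval. The behaviour of $e_m$ outside the strip is irrelevant once $X_m$ lies inside it. Your dimension count is also off: $\{f=\sum_{j=1}^m z_j^2\}\subset\mathbb{R}^{m+2}$ is already $(m+1)$-dimensional, so there is no spare coordinate.

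\textbf{The missing idea.} You need a \emph{serpentine} region: connected, inside a strip, unbounded in the second coordinate, yet meeting every vertical line in compact intervals. The paper removes from the strip $-1\le x_1\le 1$ alternating parabolic tongues:
$x_1\ge (x_2-4j)^2-\frac12$ from the right, with tips at $x_1=-\frac12$; and $x_1\le \frac12-(x_2-4j-2)^2$ from the left, with tips at $x_1=\frac12$.
Every line $x_1=t_0$ with $|t_0|\le 1$ crosses at least one of these infinite families, so it is cut into infinitely many compact intervals. Critical points of $c_m$ lie only over the tips and over $x_1=\pm1$, and the Reeb graph is an infinite zigzag with pendant edges.

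The corners where tongues meet $x_1=\pm1$ are handled by two equations, $(c_{S_1}(x_2)-x_1)(x_1-c_{S_2}(x_2))=|y_1|^2$ and $1-x_1^2=|y_2|^2$, with $y_i\in\mathbb{R}^{m_i}$ and $m_1+m_2=m$. Their gradients stay independent at the corners by transversality, and this also fixes the dimension. With such a region, your last paragraph (graph structure from saturated neighbourhoods, properness from compact closures of edge preimages) goes through as in the paper. Your abandoned ``$\cos$-like'' idea would also have worked: the band $\{|t-\frac a2\sin y|<\varepsilon\}$ with $\varepsilon<\frac a2$ is serpentine in this sense.

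\textbf{On property (1).} Your suspicion is justified. If $e_m$ were a single polynomial on the whole strip, every level set of $c_m$ would be a real algebraic set with finitely many components, and $c_m$ would have finitely many critical values. The Reeb graph would then be finite. The paper's $c_{S_i}$ are smooth functions that are quadratic only on certain $x_2$-intervals and are glued smoothly where $|c_{S_i}|>\frac74$. So $e_m$ is polynomial only on slabs in $x_2$, not on the whole strip. Property (1) can hold only in such a local sense, and that is what you should aim for rather than a globally polynomial $f$.
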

Compare this to \cite[Theorem 1]{kitazawa8}.  
In the second section, we prove Theorem \ref{thm:1}. In the proof, methods in \cite{kitazawa3, kitazawa6, kitazawa8} are respected, where we do not assume them. In the third section, we present another new work. For this, here, we introduce a new definition of graph structures on Reeb spaces in a self-contained way, reviewing and following the preprint \cite{kitazawa5}. We choose a subset $Z$ in the space $X$ of the domain whose complementary set $X-Z$ is dense in $X$. 
We call the pair $(R_c,q_c(Z))$ the {\it $(R_c,Z)$-space}. If the $(R_c,Z)$-space is regarded as a graph whose vertex set is $q_c(Z)$, then we call the graph the {\it $(R_c,Z)$-graph} and let $R_{c,Z}$ denote the $(R_c,Z)$-graph. Under this setting, we answer to a kind of variants of Problem \ref{prob:1} by Theorem \ref{thm:2}. This also comes from our observations on examples presented as \cite[Theorem 2]{kitazawa8}. This is presented more precisely in the third section.

\begin{Thm}
\label{thm:2}
Each $S^m$ with $m \geq 2$ is diffeomorphic to a smooth submanifold $Y_{m,i}$ represented as the zero set ${e_{Y,m}}^{-1}(0)$ of some smooth map $e_{Y,m}:{\mathbb{R}}^{m+2} \rightarrow {\mathbb{R}}^2$ which is real analytic outside some set $Z_{m+2,i}$ with ${\mathbb{R}}^{m+2}-Z_{m+2,i}$ being dense in ${\mathbb{R}}^{m+2}$ and which is not real analytic at any point of $Z_{m+2,i}$ and the corresponding case in the following holds for $i=1,2,3$.
\begin{enumerate}
\setcounter{enumi}{4}
\item \label{thm:2.1} The Reeb space $R_{c_{Y,m,1}}$ of the restriction $c_{Y,m,1}$ of ${\pi}_{m+2,1}$ to $Y_{m,1}$ is not homeomorphic to any graph.
\item \label{thm:2.2} The Reeb space $R_{c_{Y,m,2}}$ of the restriction $c_{Y,m,2}$ of ${\pi}_{m+2,1}$ to $Y_{m,2}$ is regarded as the Reeb graph of $c_{Y,m,2}$ and the graph $R_{c_{Y,m,2},Y_{m,2} \bigcap Z_{m+2,2}}$ is not defined.
\item \label{thm:2.3} The Reeb space $R_{c_{Y,m,3}}$ of the restriction $c_{Y,m,3}$ of ${\pi}_{m+2,1}$ to $Y_{m,3}$ is regarded as the Reeb graph of $c_{Y,m,3}$ and the graph $R_{c_{Y,m,3},Y_{m,3} \bigcap Z_{m+2,3}}$ is defined.

\end{enumerate}

\end{Thm}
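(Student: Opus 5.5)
We construct each $Y_{m,i}$ explicitly, as in the proof of Theorem \ref{thm:1}, by a ``sphere over a region'' construction. Fix a smooth function $f_i:\mathbb{R}\rightarrow \mathbb{R}$ which is positive exactly on an open interval $(-1,1)$ and nonpositive outside. Set
\[
e_{Y,m}(x_0,x_1,\ldots,x_{m+1}) := \Bigl(x_{m+1},\ \textstyle\sum_{j=1}^{m} {x_j}^2 - f_i(x_0)\Bigr).
\]
We arrange that $f_i$ has nonvanishing derivative at $\pm 1$, e.g.\ $f_i(t)=(1-t^2)g_i(t)$ with $g_i>0$. Then the zero set is a smooth $m$-sphere: it is the round fibre bundle of spheres $S^{m-1}$ of radius $\sqrt{f_i(x_0)}$ over $(-1,1)$, collapsing at the endpoints. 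It is therefore diffeomorphic to $S^m$, by the standard special generic map argument over $D^1$.

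The restriction $c_{Y,m,i}$ of ${\pi}_{m+2,1}$ is $x_0$. Its contours are the spheres $\{x_0=t\}\cap Y_{m,i}$, which are connected for $m\geq 2$. Hence $R_{c_{Y,m,i}}$ is homeomorphic to $[-1,1]$, which is useless for case (\ref{thm:2.1}). We therefore allow the multiplier to vanish in the interior and use the more flexible form
\[
e_{Y,m}=\Bigl(x_{m+1},\ \textstyle\sum_{j=2}^{m} {x_j}^2 + h_i(x_0,x_1)\Bigr),
\]
where $\{h_i\le 0\}\subset \mathbb{R}^2$ is a compact disk-like region $D_i$ with smooth boundary curve $\{h_i=0\}$ along which $dh_i\neq 0$. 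Then $Y_{m,i}$ is again a sphere, namely the double of $D_i\times D^{m-1}$ collapsed appropriately, i.e.\ the image of a special generic map onto $D_i$. The contour over $t$ corresponds to the connected components of the slice $D_i\cap\{x_0=t\}$, so $R_{c_{Y,m,i}}$ equals the Reeb space of $x_0|_{D_i}$.

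The three cases now reduce to planar choices of $D_i$ whose boundaries are non-analytic exactly on a dense-complement set $Z$.
\begin{itemize}
\item For (\ref{thm:2.1}), take $\partial D_1$ to have infinitely many wiggles accumulating at a point, using a bump-type factor $\exp(-1/t^2)\sin(1/t)$ in the $x_0$ direction. The slice has infinitely many components accumulating, so the Reeb space contains a point where infinitely many arcs meet with diameters tending to $0$ in a non-locally-finite way, and it is not homeomorphic to a graph. This is the situation of \cite[Theorem 2]{kitazawa8}.
\item For (\ref{thm:2.2}) and (\ref{thm:2.3}), take $D_i$ with finitely many Morse-type critical points of $x_0|_{\partial D_i}$, so the Reeb space is a finite graph with vertices the critical contours.
\end{itemize}
The analyticity locus is controlled by building $h_i$ from a real polynomial glued with functions $\exp(-1/s)$, e.g.\ $h_i(x)=P(x)+\phi(x_0-a)$ with $\phi(s)=\exp(-1/s)$ for $s>0$ and $0$ otherwise. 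Then $Z_{m+2,i}$ is a hyperplane $\{x_0=a\}$, whose complement is dense. For (\ref{thm:2.3}) we place $a$ so that $Z$ meets $Y$ only along contours that are critical; for (\ref{thm:2.2}) we place $a$ at a regular level. In the latter case $q_c(Y\cap Z)$ is an interior point of an edge (or $Y\cap Z$ contains whole regular contours). Adding vertices there still yields a graph, so to make the graph undefined we instead arrange that $q_c(Y\cap Z)$ is an infinite set accumulating, or a whole segment of an edge. Concretely, take $Z=\{a\le x_0\le b\}$ with the non-analytic pasting on a closed slab, using a flat function vanishing identically on an interval. Then $q_c(Y\cap Z)$ is an arc, which cannot serve as a vertex set.

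The main obstacle is verifying non-analyticity at every point of $Z_{m+2,i}$ while keeping the complement dense. If $Z$ is a closed slab, it has interior, so the complement is not dense; hence $Z$ should instead be a Cantor-like or countable accumulating family of hyperplanes $\{x_0=a_k\}$, with $a_k\to a$, including the limit hyperplane. Here analyticity fails at each $a_k$ by the flatness of $\exp(-1/s)$ gluing, and fails at the limit by accumulation of such points. Then $q_c(Y\cap Z)$ is infinite with an accumulation point inside an edge, which is not a vertex set of a graph structure; this settles (\ref{thm:2.2}). For (\ref{thm:2.3}) we use finitely many hyperplanes at critical levels, so the vertex set is exactly $q_c(Y\cap Z)$ together with the critical contours, and the Reeb graph structure coincides with the $(R_c,Z)$-graph.
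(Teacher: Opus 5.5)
Your outline is correct. For case (5) it is essentially the paper's idea. $Y$ lies over a planar region via a ``sphere over a region'' (special generic) map, so $R_c$ is the Reeb space of the height function on the region. A flat oscillation such as $e^{-1/t^2}\sin(1/t)$ in the boundary then produces infinitely many critical contours accumulating.

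For (6) and (7) you take a genuinely different route.

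\emph{The paper.} It uses two genuine equations, $R_0-x_1^2-x_2^2=|y_1|^2$ and $x_1-c_0(x_2)=|y_2|^2$. The region is thus a disk cut by the wiggly curve; corners are harmless because $y_1$ and $y_2$ collapse independently. Compactness comes from the analytic circle, and the non-analyticity of $e_{Y,m}$ comes only from the flat points of $c_0$. The paper obtains (7) from (5) by a slight rotation of the $(x_1,x_2)$-plane: a linear tilt dominates a flat oscillation, so only finitely many critical values survive and $Y\cap Z$ stays finite. It obtains (6) by translating and affinely rescaling so that the non-analytic lines run across the region transversally to the levels. Then $q_c(Y\cap Z)$ contains arcs and cannot be a vertex set.

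\emph{Your route.} You build (6) and (7) independently from a Morse-type region and take the non-analytic locus to be a union of \emph{level} hyperplanes $\{x_0=a_k\}$. Then $q_c(Y\cap Z)$ is just the set of contours at those levels. Finitely many levels give (7). A sequence $a_k\to a$ of regular levels, non-analytic at the limit hyperplane by accumulation, makes $q_c(Y\cap Z)$ non-discrete and gives (6).

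\emph{What each buys.} The paper's route is more economical: one family of examples, and it shows that merely tilting the height direction switches between graph and non-graph. Yours makes $q_c(Y\cap Z)$ trivial to compute and reduces the defined/undefined dichotomy to discreteness. Inside your framework, (6) is also obtained more cheaply by the paper's mechanism: a single hyperplane $\{x_1=b\}$ crossing $D_2$ already puts an arc into $q_c(Y\cap Z)$.

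Some details need care.

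\textbf{Case (5) with a single $h_1$.} You cannot have $h_1$ analytic off only the line through the accumulation point if the wiggly arc is the graph $x_0=\mathrm{const}+e^{-1/x_1^2}\sin(1/x_1)$. By the identity theorem, $\{h_1=0\}$ would contain that graph over all of $x_1>0$, contradicting compactness. So the lines where you cut off must be included in $Z_{m+2,1}$; the paper's second equation sidesteps this.

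\textbf{Additive versus multiplicative perturbation.} Additive terms $\epsilon_k\phi(x_0-a_k)$ alter $D_i$ above $a_k$ and shift the higher critical values, so ``hyperplanes at critical levels'' is circular as written. Use instead $h=P\,(1+\sum_k\epsilon_k\phi(x_0-a_k))$. It has the same $\{h\le 0\}$ and $dh\neq 0$ on $\{h=0\}$. It is still non-analytic at every point of each hyperplane: if $P\,\phi(x_0-a_k)$ were analytic near such a point, it would vanish identically there, forcing $P\equiv 0$.

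\textbf{The slab.} You were right to discard it.

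\textbf{``Not homeomorphic to any graph''.} Argue by compactness. A compact graph is finite, hence has only finitely many points without an open-interval neighbourhood. Your Reeb space has infinitely many leaves. Non-local-finiteness by itself does not rule out a CW structure.
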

\section{On Theorem \ref{thm:1}.}
Let $\mathbb{Z}$ denote the set of all integers.
\begin{proof}[A proof of Theorem \ref{thm:1}]

We first define a smooth map $e_m:{\mathbb{R}}^m \rightarrow \mathbb{R}$, an open set $\{t \mid -a<t<a\} \times {\mathbb{R}}^m={\mathbb{R}}^{m+1}$, a manifold $X_m$, and a function $c_m:X_m \rightarrow \mathbb{R}$, in STEP 1-1. In this step, we also prove the property (\ref{thm:1.1}). In STEP 1-2, we prove the remaining properties (\ref{thm:1.2}), (\ref{thm:1.3}), and (\ref{thm:1.4}). 

\ \\
STEP 1-1 A smooth map $e_m:{\mathbb{R}}^m \rightarrow \mathbb{R}$, an open set $\{t \mid -a<t<a\} \times {\mathbb{R}}^m={\mathbb{R}}^{m+1}$, a manifold $X_m$, and a function $c_m:X_m \rightarrow \mathbb{R}$.

Let $S_{1}:={\bigcup}_{j \in \mathbb{Z}} \{(x_1,x_2)\mid  x_1-{(x_2-4j)}^2+\frac{1}{2}=0\}$. Let $S_{2}:={\bigcup}_{j \in \mathbb{Z}} \{(x_1,x_2)\mid  x_1+{(x_2-4j-2)}^2-\frac{1}{2}=0\}$. We can see $S_1$ and $S_2$ are disjoint. They are unions of parabolas. 
Let $S_{i+3}:=\{(2i-1,x_2) \mid x_2 \in \mathbb{R}\}$ for $i=0,1$.
Let $D_S:=\{(x_1,x_2) \mid  x_1-{(x_2-4j)}^2+\frac{1}{2}<0\ {\rm (}j \in \mathbb{Z}{\rm )}, x_1+{(x_2-4j-2)}^2-\frac{1}{2}>0\ {\rm (}j \in \mathbb{Z}{\rm )},-1<x_1<1\} \subset {\mathbb{R}}^2$, which is a region surrounded by these curves $S_i$ ($i=1,2,3,4$).
The closure $\overline{D_S}:=\{(x_1,x_2) \mid x_1-{(x_2-4j)}^2+\frac{1}{2} \leq 0\ {\rm (}j \in \mathbb{Z}{\rm )}, x_1+{(x_2-4j-2)}^2-\frac{1}{2} \geq 0\ {\rm (}j \in \mathbb{Z}{\rm )},-1 \leq x_1 \leq 1\} \subset {\mathbb{R}}^2$ is also important.
The set $S_i \bigcap \overline{D_S}$ ($i=1,2$) is the disjoint union of countably many connected curves in parabolas. Each of the set is extended to a curve satisfying the relation $\{(c_{S_i}(x_2),x_2) \mid x_2 \in \mathbb{R}\} \bigcap \overline{D_S}=S_i \bigcap \overline{D_S}$ with the following.
\begin{itemize}
\item $c_{S_i}$ is a smooth function such that on ${\bigcup}_{j \in \mathbb{Z}} \{x_2 \mid 4j-2i-\frac{1}{2}<x_2<4j-2i+\frac{5}{2}\}$, it is a real polynomial function of degree $2$ for parabolas and that outside the set ${\bigcup}_{j \in \mathbb{Z}} \{x_2 \mid 4j-2i-\frac{1}{2}<x_2<4j-2i+\frac{5}{2}\}$ in $\mathbb{R}$, the absolute values of the values of $c_{s_i}$ are always greater that ${(\frac{3}{2})}^2-\frac{1}{2}=\frac{7}{4}$.
\item The following are equivalent.
\begin{itemize}
\item $-1 \leq c_{S_i}(x_2) \leq 1$.

\item $4j-2(i-1)-\frac{\sqrt{3}}{2} \leq x_2 \leq 4j-2(i-1)+\frac{\sqrt{3}}{2}$, $j \in \mathbb{Z}$.
\end{itemize}
\end{itemize}
Let $m_1$ and $m_2$ be positive integers satisfying $m=m_1+m_2$. 
We define $X_m:=\{(x_1,x_2,(y_{1,j})_{j=1}^{m_1},(y_{2,j})_{j=1}^{m_2}) \mid (c_{S_1}(x_2)-x_1)(x_1-c_{S_2}(x_2))-{\Sigma}_{j=1}^{m_1} {y_{1,j}}^2=0, (x_1+1)(1-x_1)-{\Sigma}_{j=1}^{m_1} {y_{2,j}}^2=0, -1<x_1<1\}$ and a map $e_m:{\mathbb{R}}^{m+2} \rightarrow {\mathbb{R}}^2$ by $e_m(x_1,x_2,(y_{1,j})_{j=1}^{m_1},(y_{2,j})_{j=1}^{m_2}):=(c_{S_1}(x_2)-x_1)(x_1-c_{S_2}(x_2))-{\Sigma}_{j=1}^{m_1} {y_{1,j}}^2, (x_1+1)(1-x_1)-{\Sigma}_{j=1}^{m_1} {y_{2,j}}^2)$. Let $e_{m,1}$ and $e_{m,2}$ denote the 1st and the 2nd components of the map $e_m$, respectively.  

For this, we apply an argument in \cite[A proof of Theorem 1]{kitazawa8}. We also respect \cite{kitazawa3, kitazawa6} for this. We discuss our proof self-contained way.

We prove that $X_m$ is a smooth submanifold of ${\mathbb{R}}^{m+2}$ by the implicit function. 
First, by the definition, $X_m$ is mapped onto $\overline{D_S}$ by ${\pi}_{m+2,2}$.

We consider the following three cases to see that $X_m$ is an $m$-dimensional smooth manifold and $X_m={e_m}^{-1}(0)$. \\
\ \\
Case 1-1-1. At a point $(x_{0,1},x_{0,2},y_0) \in X_m$ such that $(x_{0,1},x_{0,2}) \in D_S$. \\
There, the value of the partial derivative of $e_{m_1}$ by some $y_{1,j}$ is not zero and that of $e_{m_2}$ by some $y_{2,j}$ is not zero. 
There, the value of the partial derivative of $e_{m_1}$ by some $y_{2,j}$ is $0$ and that of $e_{m_2}$ by some $y_{1,j}$ is $0$. We have shown that at the point $(x_{0,1},x_{0,2},y_0) \in X_m$ such that $(x_{0,1},x_{0,2}) \in D_S$, the rank of the differential of $e_m$ is $2$. \\
\ \\
Case 1-1-2 At a point $(x_{0,1},x_{0,2},y_0) \in X_m$ such that $(x_{0,1},x_{0,2}) \in \overline{D_S}-D_S$ and that $(x_{0,1},x_{0,2})$ is in exactly one curve from $S_i$ ($i=1,2,3,4$). \\
Let $(x_{0,1},x_{0,2}) \in S_1 \bigcup S_2$. The value of the partial derivative of the function $e_{m,1}$ by $x_1$ or $x_2$ is not $0$ by the smoothness of the curves $S_1$ and $S_2$, there. The value of the partial derivative of the function $e_{m,1}$ by $y_{i,j}$ is $0$, there. The value of the partial derivative of the function $e_{m,2}$ by some $y_{2,j}$ is not $0$, there. At a point $(x_{0,1},x_{0,2},y_0) \in X_m$ such that $(x_{0,1},x_{0,2}) \in \overline{D_S}-D_S$ and that $(x_{0,1},x_{0,2})$ is in exactly one curve from $S_1$ and $S_2$ the rank of the differential of $e_m$ is $2$. 

In the case $(x_{0,1},x_{0,2}) \in S_3 \bigcup S_4$, we can argue similarly, by the symmetry. \\
\ \\
Case 1-1-3 At a point $(x_{0,1},x_{0,2},y_0) \in X_m$ such that $(x_{0,1},x_{0,2}) \in \overline{D_S}-D_S$ and that $(x_{0,1},x_{0,2})$ is in exactly two curves from $S_i$ ($i=1,2,3,4$). \\

There, the value of the partial derivative of $e_{m_1}$ by some $x_a$ is not zero and that of $e_{m_2}$ by some $x_a$ is not zero. There, the value of the partial derivative of $e_{m_1}$ by any $y_{a,j}$ is $0$ and that of $e_{m_2}$ by any $y_{a,j}$ is $0$. 
By our setting, we can see that in $\overline{D_S}$, at most two curves from $S_i$ ($i=1,2,3,4$) intersect and that they intersect in transversal ways. In other words, the corresponding tangent vectors which are not the zero vectors at each point of the intersections are mutually independent and for normal vectors there, a similar fact holds.

From this, we can see that at the point $(x_{0,1},x_{0,2},y_0) \in X_m$ such that $(x_{0,1},x_{0,2}) \in \overline{D_S}-D_S$ and that $(x_{0,1},x_{0,2})$ is in exactly two curves from $S_i$ ($i=1,2,3,4$), the rank of the differential of $e_m$ is $2$. 

From Case 1-1-1, Case 1-1-2, and Case 1-1-3, the differential of $e_m$ at each point of $X_m$ is of rank $2$. We give small remark on other objects. We can put $a=t_a$ such that $1<t_a<{(\frac{3}{2})}^2-\frac{1}{2}=\frac{7}{4}$. We can also see $X_m \subset \{t \mid -1 \leq t \leq 1\} \times {\mathbb{R}}^{m+1} \subset \{t \mid -a<t<a\} \times {\mathbb{R}}^{m+1} \subset {\mathbb{R}}^{m+2}$. We can define the function $c_m$ as the restriction of ${\pi}_{m+2,1}$ to $X_m$. The properties (\ref{thm:1.1}, \ref{thm:1.2}) are shown to be enjoyed.
 \\
\ \\
STEP 1-2 The properties (\ref{thm:1.3}) and (\ref{thm:1.4}).

By the construction, each contour of $c_m$ is represented as the intersection of $X_{m}$ and the preimage ${{\pi}_{m+2,2}}^{-1}(\{(t_0,t) \mid a_{t_0,1} \leq t \leq a_{t_0,2}\})$ of a closed interval of the form $\{(t_0,t) \mid a_{t_0,1} \leq t \leq a_{t_0,2}\}$ and compact. In addition, each critical contour of $c_m$ must be represented as the intersection of $X_{m}$ and the preimage ${{\pi}_{m+2,2}}^{-1}(\{(\pm \frac{1}{2},t) \mid a_{\pm \frac{1}{2},1} \leq t \leq a_{\pm \frac{1}{2},2}\})$ of a closed interval of the form $\{(\pm \frac{1}{2},t) \mid a_{\pm \frac{1}{2},1} \leq t \leq a_{\pm \frac{1}{2},2}\}$ and compact. For each critical contour of $c_m$, we can have its small, open and connected neighborhood in $X_m$ which is represented as a union of contours of $c_m$ which are all regular except the given one and whose closure in $X_m$ is also saturated with respect to $c_m$ and compact. In other words, such a union is a {\it saturated} space with respect to $c_m$, where we respect \cite{saeki3}. For each regular contour of $c_m$, we can also have its small, open and connected neighborhood in $X_m$ which is represented as a union of regular contours of $c_m$ and whose closure in $X_m$ is also saturated with respect to $c_m$ and compact. By arguments from \cite{saeki2, saeki3}, $R_{c_m}$ is regarded as the Reeb graph of $c_m$ which is infinite. 
Each edge $e$ of the Reeb graph is represented as the image $q_{c_m}(U_e)$ of a saturated open set $U_e$ with respect to $c_m$ whose closure contains one or two critical contours of $c_m$.
We can choose a suitable graph $G_0$ for the property (\ref{thm:1.3}). The quotient map $q_{c_m};X_m \rightarrow R_{c_m}$ is shown to be proper from the argument and the property (\ref{thm:1.4}) is shown to be enjoyed.  \\

\ \\
This completes the proof of Theorem \ref{thm:1}.
\end{proof}

\section{On Theorem 2.}
We discuss Theorem \ref{thm:2} more precisely, and present this in a revised way, as Theorem \ref{thm:3}.

Hereafter, let ${\mathbb{C}}^k$ denote the {\it $k$-dimensional complex affine space}. Let ${\mathbb{R}}^k$ be identified with the real part $\{x=(x_1, \cdots x_k) \in {\mathbb{R}}^k\} \subset {\mathbb{C}}^k\}$ of ${\mathbb{C}}^k$ canonically.

\begin{Def}
A smooth map $c:{\mathbb{R}}^{k_1} \rightarrow {\mathbb{R}}^{k_2}$ on ${\mathbb{R}}^{k_1}$ is {\it densely real analytic} or {\it D-RAn} if it is real analytic outside a subset $Z$ such that ${\mathbb{R}}^{k_1}-Z$ is dense in ${\mathbb{R}}^{k_1}$.
Furthermore, if the restriction of $c$ to ${\mathbb{R}}^{k_1}-Z$ is the restriction of a complex analytic map defined in the subset $U_{{\mathbb{R}}^k}-Z$ of an open neighborhood $U_{{\mathbb{R}}^k} \supset {\mathbb{R}}^k$ of ${\mathbb{R}}^k$ in ${\mathbb{C}}^k$, then $c$ is said to be {\it densely complex real analytic} or {\it D-CRAn}.
\end{Def}
\begin{Ex}
	\label{ex:1}
	A function $c:\mathbb{R} \rightarrow \mathbb{R}$ such that $c(x):=0$ ($x \leq 0$) and that $c(x):=e^{-\frac{1}{x}}$ ($x \geq 0$) is D-RAn. It is not D-CRAn.
\end{Ex}
\begin{Ex}
	\label{ex:2}
		A function $c:\mathbb{R} \rightarrow \mathbb{R}$ defined by $c(x):=R \times e^{-\frac{1}{x^2}}(\frac{1}{x})^{j_0}{\Sigma}_{j \in J} (\frac{1}{x})^{i_{j}}T_{j}(\sin \frac{1}{x},\cos \frac{1}{x})$ ($x \neq 0$) with the following and $c(0):=0$ is D-CRAn. 
		\begin{itemize}
\item $R \neq 0$ is a real number.
			\item $j_0$ is an integer.
			\item $J$ is a non-empty finite set.
			\item $i_j$ is a non-negative integer defined for each $j \in J$.
			\item $T_j(x)$ is a real polynomial function defined on $\mathbb{R}$ and defined for each $j \in J$.
		\end{itemize}
		The $i$-th derivative of the function $c$ is also represented in this form for each positive integer $i>0$.
	\end{Ex}

	Note that the function of Example \ref{ex:2} is represented in a single elementary function on the dense set $\mathbb{R}-\{0\} \subset \mathbb{R}$. Hereafter, we call this function a {\it special} D-CRAn function or an {\it S-D-CRAn} function. See also \cite[A proof of Theorem 2, Remark 1]{kitazawa8}, where we do not assume related knowledge. Such a function is important in real analysis and it is also presented in \cite[A proof of Theorem 2, Remark 1]{kitazawa8} as a key object.

\begin{Thm}
\label{thm:3}
We can have a case of Theorem \ref{thm:2} with the following.
\begin{enumerate}
\setcounter{enumi}{7}
\item \label{thm:3.1}
$c_{Y,m,i}$ is the restriction of ${\pi}_{m+2,1}$ to $Y_{m,i}$.
\item  \label{thm:3.2} The image of the restriction of ${\pi}_{m+2,2}$ to $Y_{m,i}$ is as follows.
\begin{enumerate}
\item \label{thm:3.2.1} The closure of the region surrounded by a circle $S_{1,R_0}:=\{(x_1,x_2) \mid {x_1}^2+{x_2}^2=R_0\}$ whose radius is $R_0>0$ and
$S_{2,1}:=\{(c_0(x_2),x_2) \mid x_2 \in \mathbb{R}\}$, where $c_0$ is the composition of a real polynomial function $p_{0,R_0}(t):=t(R_0-t)$ on $\mathbb{R}$ with a suitably chosen S-D-CRAn function with a sufficiently small positive number $R>0$ and "$c(x):=R \times e^{-\frac{1}{x^2}} \times {\sin}^2 (\frac{1}{x})$ in Example \ref{ex:2}" , in the case $i=1$. 
\item \label{thm:3.2.2} The closure of the region surrounded by the circle $S_{1,R_0}$ and $S_{2,3}$, the curve obtained by a suitable rotation of $S_{2,1}$ around the origin $0 \in \mathbb{R}$, in the case $i=3$. 
\item \label{thm:3.2.3} The closure of the region surrounded by the circle $S_{1,R_0}$ and $S_{2,2}$, the curve obtained by a suitable parallel transformation with a suitable affine transformation of $S_{2,2}$, in the case $i=2$. 
\end{enumerate}

\end{enumerate}
\end{Thm}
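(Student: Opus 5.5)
We follow the pattern of the proof of Theorem \ref{thm:1}. In each case $i=1,2,3$ we take a planar region $D_i$ bounded by the circle $S_{1,R_0}$ and a curve $S_{2,i}$. We set
\[
Y_{m,i}:=\{(x_1,x_2,(y_{1,j})_{j=1}^{m_1},(y_{2,j})_{j=1}^{m_2}) \mid f_{1,i}(x_1,x_2)-{\Sigma}_j {y_{1,j}}^2=0,\ f_{2,i}(x_1,x_2)-{\Sigma}_j {y_{2,j}}^2=0\},
\]
with $m=m_1+m_2$, $m_1,m_2>0$. Here $f_{1,i}(x_1,x_2):=R_0-{x_1}^2-{x_2}^2$ (after adjusting the radius convention), and $f_{2,i}$ is, up to rotation or affine change, $x_1-c_0(x_2)$. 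The map $e_{Y,m,i}$ has these two components. Since $c_0=p_{0,R_0}\circ c$ with $c(x)=R e^{-1/x^2}\sin^2(1/x)$, the map $e_{Y,m,i}$ is smooth, and it is real analytic exactly off the set $Z_{m+2,i}$ coming from the line $x_2=0$ (or its rotated or translated image). At those points $c$ is flat but not identically zero, so analyticity fails there, while the complement is open and dense. The regularity of $e_{Y,m,i}$ on $Y_{m,i}$ will be checked by the same three cases as Case 1-1-1 to Case 1-1-3. In the interior of $D_i$ the $y$-derivatives give rank $2$. On exactly one boundary curve, the $x$-gradient of that curve's defining function together with the $y$-derivatives of the other component give rank $2$. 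At a corner we use transversality of $S_{1,R_0}$ and $S_{2,i}$, which we arrange by choosing $R$ small so that $S_{2,i}$ is $C^1$-close to a line crossing the circle transversally. Since $D_i$ is a closed topological disk bounded by two arcs meeting transversally at two points, $Y_{m,i}$ is the boundary of $D^{m_1+1}\times D^{m_2+1}$ with corners smoothed, hence it is diffeomorphic to $S^m$. This gives (8) and (9) by definition.

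The Reeb space is computed from the fibres of $\pi_{m+2,1}$ over $D_i$. As in STEP 1-2, the contour over $x_1=t$ corresponds to a connected component of $D_i\cap\{x_1=t\}$. So $R_{c_{Y,m,i}}$ is the Reeb space of $x_1|_{D_i}$.

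For $i=1$ we keep the flat curve oriented so that $S_{2,1}$ is $x_1=c_0(x_2)$. Then $c_0$ oscillates with zeros at $x_2=1/(k\pi)$, which accumulate at $0$. Each local minimum of $x_1$ along the boundary arc creates a branching, so the Reeb space has infinitely many vertices accumulating at the contour through $x_2=0$. We then show that no neighbourhood of that point is a finite cone, which proves (5).

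For $i=3$ we rotate $S_{2,1}$ so that the oscillating part is no longer seen by $x_1$. For a suitable rotation, $x_1$ restricted to $S_{2,3}$ is monotone near the flat point, because the slope perturbation coming from $c_0'$ is tiny compared with the rotated linear term. The Reeb space is then an interval, that is, a graph whose only critical contours lie over the two extreme points of $S_{1,R_0}$. The point of $Z$, lying at the flat point, maps to an interior point of the interval. We choose the data so that $q_c(Y\cap Z)$ is finite and discrete, with vertex set $q_c(Y\cap Z)$ union the endpoints; then $R_{c,Z}$ is defined, giving (7).

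For $i=2$ we apply a translation or affine change so that the Reeb graph is still a finite graph, but $Z$ is no longer finite or discrete. Concretely, we use the points of $Z_{m+2,2}$ given by non-analyticity of $e^{-1/x^2}\sin^2(1/x)$, or make the non-analytic locus a segment along which $x_1$ is constant on a non-discrete set. Then $q_c(Y\cap Z)$ is not a closed discrete subset, or is an arc in an edge, so it cannot be a vertex set, giving (6).

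The main obstacle is the combination of two requirements. The first is controlling the exact non-analytic set $Z$, namely showing that $e_{Y,m,i}$ is not analytic at each point of $Z$ while $Z$ has dense complement. The second is calibrating the rotation and affine choices for $i=2,3$ so that monotonicity near the flat point holds while the position of $q_c(Z)$ is the desired one. I would first attempt explicit derivative bounds on $c_0'$ near $0$, using that every derivative is of the form given in Example \ref{ex:2} and hence tends to $0$ at $0$.
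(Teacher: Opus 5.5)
\textbf{The gap: the non-analytic set in cases $i=3$ and $i=2$.} This is exactly what separates (6) from (7).

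With your choice $c_0=p_{0,R_0}\circ c$, the only non-analytic value is $x_2=0$. But then $e_{Y,m}$ is non-analytic at \emph{every} point of the hyperplane $\{x_2=0\}$: restrict it to lines parallel to the $x_2$-axis. So $Z_{m+2,1}$ is this whole hyperplane. Hence $Y_{m,1}\cap Z_{m+2,1}$ is the $(m-1)$-dimensional preimage of the entire chord of $D_1$ joining the flat point $(0,0)$ to the circle. It is not a single point over the flat point.

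Now take the rotation you use for $i=3$. For any angle $\theta$ with $\cos\theta\neq 0$, $x_1$ is strictly monotone along the rotated chord. This includes the small angles your argument $|c_0'|\ll|\sin\theta|$ is designed for. So $q_c(Y_{m,3}\cap Z_{m+2,3})$ is a non-degenerate arc of the Reeb graph. An arc cannot lie inside a vertex set, so the $(R_c,Z)$-graph is \emph{not} defined. Your $i=3$ construction therefore realizes (6), not (7), and the sentence ``the point of $Z$, lying at the flat point, maps to an interior point'' is false.

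Your $i=2$ paragraph also lacks a working mechanism. The function $e^{-1/x^2}\sin^2(1/x)$ is analytic away from $0$. And a non-analytic segment on which $x_1$ is constant lies in one contour, so it maps to a single point. That is the opposite of what (6) needs.

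\textbf{How the paper handles it, and how you can repair your version.} The paper places $Z$ differently. It arranges $Y_{m,1}\cap Z_{m,1}=\{(0,R_0,0),(0,-R_0,0)\}$, with the non-analytic locus lying over the lines $x_2=\pm R_0$. These lines touch the region only at those two boundary points. A slight rotation keeps $Y_{m,3}\cap Z_{m,3}$ a two-point set, which adds at most two vertices; this gives (7). For (6), the paper translates $S_{2,1}$ by $t_1$ and rescales $x_2$ by $t_2$, so that $Z$ projects onto $\{x_2=\pm t_2R_0\}$. These lines cut the region along two segments transverse to the level lines. A final rotation makes the Reeb graph finite.

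Within your own placement you can repair the argument by swapping the two cases:
\begin{itemize}
\item A generic rotation gives (6).
\item A rotation by exactly $\pm\pi/2$ gives (7). It turns the non-analytic chord into the level segment $\{x_1=0\}$, which is a single contour. It also makes $x_1$ monotone along all of $S_{2,3}$, so the Reeb graph is an interval. Then $q_c(Y\cap Z)$ is one interior point, and (7) holds under the vertex convention you already use.
\end{itemize}

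\textbf{A separate, smaller issue.} Identifying $R_{c_{Y,m,i}}$ with the Reeb space of $x_1|_{D_i}$ requires $m_1,m_2\geq 2$. A level segment with both ends on the same boundary curve has preimage $S^{m_1-1}\times S^{m_2}$ or $S^{m_1}\times S^{m_2-1}$. This is disconnected when $m_1=1$ or $m_2=1$, for instance when $m=2$. The qualitative conclusions survive, but you need to account for this.
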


\begin{proof}[A proof of Theorem \ref{thm:3}, with Theorem \ref{thm:2}]
Let $m_1$ and $m_2$ be positive integers. Let $m:=m_1+m_2$. 

We can define $Y_{m,1}:=\{(x_1,x_2,(y_{1,j})_{j=1}^{m_1},(y_{2,j})_{j=1}^{m_2}) \mid R_0-{x_1}^2-{x_2}^2-{\Sigma}_{j=1}^{m_1} {y_{1,j}}^2=0, x_1-c_0(x_2)-{\Sigma}_{j=1}^{m_1} {y_{2,j}}^2=0\}$ and this is regarded as the zero set of a smooth map $e_{Y,m}:{\mathbb{R}}^{m+2} \rightarrow {\mathbb{R}}^2$ defined as in "A proof of Theorem \ref{thm:1}" and an $m$-dimensional smooth compact submanifold of ${\mathbb{R}}^{m+2}$. We can prove this fact as in "A proof of Theorem \ref{thm:1}", by applying implicit function theorem. We omit precise exposition on this.
 
The function $c_0$ is a D-CRAn function whose restriction to $\mathbb{R}-\{0\}$ is real analytic and extends to $\mathbb{C}-\{0\}$ as a holomorphic function in the unique way. In addition, the value of the $j$-th derivative of $c_0$ at $0$ is always $0$. The set $Z_{m,1}$ can be chosen in such a way that $Y_{m,1} \bigcap Z_{m,1}:=\{(0,R_0,0),(0,-R_0,0)\} \subset \mathbb{R} \times \mathbb{R} \times {\mathbb{R}}^m={\mathbb{R}}^{m+2}$ by the structure of the manifolds and the maps. The Reeb space $R_{c_{Y,m,1}}$ of the restriction $c_{Y,m,1}$ of ${\pi}_{m+2,1}$ to $Y_{m,1}$ is not homeomorphic to a graph since around $(0,0,0)$, infinitely many real numbers $x_{2,a}$ can be chosen and yield infinitely many critical points of $c_{Y,m,i}$ of the form $(0,x_{2,a},y_{1,a,j},0) \in {\mathbb{R}}^2 \times {\mathbb{R}}^{m_1} \times {\mathbb{R}}^{m_2}$. Note also that these points yield infinitely many critical contours of $c_{Y,m,1}$. See also \cite[A proof of Theorem 1]{kitazawa8}. Furthermore, by considering the projection of $Y_{m,1}$ to the line $\{0\} \times \mathbb{R} \times \{0\} \subset \mathbb{R} \times \mathbb{R} \times {\mathbb{R}}^m={\mathbb{R}}^{m+2}$, we have a smooth function with exactly two critical points and by a well-known classical theorem and the structure of the manifolds and the maps, this is diffeomorphic to $S^m$. 

This completes the proof for the $c_{Y,m,1}$ case.

We investigate the $c_{Y,m,3}$ case. 

We can have a slightly rotated version of the case $c_{Y,m,1}$ around $\{(0,0,y) \mid y \in {\mathbb{R}}^m\} \subset {\mathbb{R}}^{m+2}$. By the form of manifolds and maps, the number of critical values of $c_{Y,m,3}$ is finite. From \cite{saeki2}, we have the Reeb graph $R_{c_{Y,m,3}}$. The set $Z_{m,3}$ can be chosen in such a way that $Y_{m,3} \bigcap Z_{m,3}$ is a two-point set and this only adds at most two vertices to the Reeb graph.

This completes the proof for the case $c_{Y,m,3}$. 

We investigate the $c_{Y,m,2}$ case. 

We can have a slightly changed version of the case $c_{Y,m,1}$. We consider the case $c_{Y,m,1}$ and $S_{2,1}$ is moved to $S_{2,1,1}$ by a parallel transformation in such a way that $(x_1,x_2) \in S_{2,1}$ is moved to $(x_1-t_1,x_2)$ with a sufficiently small $t_1>0$. After that, we change each point of $S_{2,1}$ which is also in the disk $D_{1,R_0}:=\{(x_1,x_2) \mid {x_1}^2+{x_2}^2 \leq R_0\}$ by the uniquely defined affine transformation mapping $(x_1-t_1,x_2)$ to $(x_1-t_1,t_2x_2)$ and $(-t_1,\pm R_0)$ to $(-t_1,\pm t_2R_0) \in S_{1,R_0}$ with $0<t_2<1$. 
The curve $S_{2,1,1}$ is mapped onto $S_{2,1,2}$. The image of the restriction of ${\pi}_{m+2,2}$ to $Y_{m,1}$ is moved and by the construction, we can have a similar map $c_{Y,m,4}$ on a smooth compact submanifold $Y_{m,4} \subset {\mathbb{R}}^{m+2}$, diffeomorphic to $S^m$, and the set $Z_{m,4}$ like $Z_{m,1}$ and $Z_{m,3}$ is mapped onto $\{(t,t_2R_0) \mid t \in \mathbb{R}\} \sqcup \{(t,-t_2R_0) \mid t \in \mathbb{R}\}$ and the intersection of this and the image of the sphere $Y_{m,4}$ by the projection ${\pi}_{m+2,2}$ is a disjoint union of two straight segments. By considering a rotation as in the case $c_{Y,m,4}$, we have a desired situation for the $c_{Y,m,2}$ case similarly.

 This completes the proof for the case $c_{Y,m,2}$. 

This completes the proof.

\end{proof}
\begin{Rem}
	\label{rem:1}
	In \cite{kitazawa5}, the Reeb graph $R_c$ of a continuous function $c:X \rightarrow \mathbb{R}$ on a closed manifold $X$ which is smooth outside some set $Z$ of Lebesgue measure $0$ is considered and there, a point $v \in R_c$ is a vertex if and only if at least one of the following hold.
	\begin{itemize}
		\item ${q_c}^{-1}(v)$ is a critical contour of $c$. 
		\item ${q_c}^{-1}(v) \bigcap Z$ is non-empty. 
	\end{itemize}
	Note that if $Z$ is not of Lebesgue measure $0$, then $X-Z$ is not dense in $X$.
	Compare this to the definition of the graph $R_{c,Z}$.
\end{Rem}
\begin{Rem}
	\label{rem:2}
	Related to Remark \ref{rem:1}, a continuous map on a compact topological space and its Reeb space is investigated in \cite{gelbukh}. For example, for a smooth function on a closed manifold, the Reeb space is a so-called {\it Peano continuum} and homotopic to a finite graph. See \cite[Theorem 7.5]{gelbukh}.
\end{Rem}

 \section{Conflict of interest and Data availability.}
  \noindent {\bf Conflict of interest.} \\
 The author is a researcher at Osaka Central Advanced Mathematical Institute (OCAMI researcher). This is supported by MEXT Promotion of Distinctive Joint Research Center Program JPMXP0723833165. He thanks this, where he is not employed there. \\
  \ \\
  {\bf Data availability.} \\
  No data other than the present article is generated.

\end{document}